\documentclass[12pt]{amsart}
\usepackage[english]{babel}

\usepackage{amsmath,amsthm,amssymb}
\usepackage{amsfonts}
\usepackage[width=16.00cm, height=22.00cm]{geometry}
	 

\newcommand{\N}{\mathbb{N}}

\newcommand{\C}{\mathbb{C}}

\newcommand{\PmX}{\mathcal{P}(^m X)}

\newtheorem{teo}{Theorem}[section]
\newtheorem{lema}[teo]{Lemma}

\newtheorem{coro}[teo]{Corollary}
\newtheorem{prop}[teo]{Proposition}
\theoremstyle{definition}
\newtheorem{examp}[teo]{Example}
\newtheorem{nota}[teo]{Remark}
%
%

%

\setlength{\parindent}{0pt}
\setlength{\parskip}{5pt}

\title{Mean ergodic composition operators in spaces of homogeneous polynomials}

\author{David Jornet \and Daniel Santacreu \and Pablo Sevilla-Peris}


\keywords{Space of homogeneous polynomials on a Banach space; composition operator; power bounded; mean ergodic; Ces\`aro bounded}

\subjclass[2010]{Primary 46G25, 47B33, 47A35, Secondary 46E50}

\begin{document}
\begin{abstract}
We study some dynamical properties of composition operators defined on the space $\PmX$ of $m$-homogeneous polynomials on a Banach space $X$ when $\PmX$ is endowed with two different topologies: the one of uniform convergence on compact sets and the one defined by the usual norm. The situation is quite different for both topologies: while in the case of uniform convergence on compact sets every power bounded composition operator is uniformly mean ergodic, for the topology of the norm there is no relation between the latter properties. Several examples are given.
\end{abstract}
\maketitle

\section{Introduction}

If $X$ is a complex Banach space and $\varphi : X \to X$ is holomorphic (all needed definitions are given below), then $C_{\varphi} : H(X) \to H(X)$, the \textit{composition operator of symbol $\varphi$}, is defined as $C_{\varphi}(f) = f \circ \varphi$. In this note 
we deal with the restriction of such an operator to the space $\mathcal{P}(^{m}X)$ of $m$-homogeneous polynomials, and we ask different questions. In first place, for which $\varphi$'s does this restriction take values again in $\mathcal{P}(^{m}X)$ (in other words, 
we have $C_{\varphi} : \mathcal{P}(^{m}X) \to \mathcal{P}(^{m}X)$ is well defined). Once we have settled this question (see Proposition~\ref{Linealsiosi}), we   study certain properties related with the linear dynamics of the composition operator (that is, with the behaviour of the iterated composition of the operator with itself): \textit{power boundedness} and \textit{mean ergodicity}.

We begin
by fixing some notation and basic notions. Given complex Banach spaces $X$ and $Y$, a mapping $p:X \to Y$ is an $m$-homogenous polynomial if there exists a continuous $m$-linear operator $L : X \times \cdots \times X \to Y$ so that $p(x) = L(x, \ldots , x)$ for every $x \in X$. The vector space of all $m$-homogenous polynomials is denoted by $\mathcal{P}(^{m} X, Y)$, and $\mathcal{P}(^{m} X)$ whenever $Y=\mathbb{C}$. Note that $\mathcal{P}(^{1}X)$ is nothing else than the topological dual of $X$, which we denote by $X'$. A function $f :X \to Y$ is holomorphic if 
there exists a (unique) sequence $(p_{m})_{m}$, where each $p_{m} : X \to Y$ is an $m$-homogeneous polynomial which satisfies
\begin{equation} \label{taylor}
f(x) = \sum_{m=0}^{\infty} p_{m} (x)
\end{equation}
uniformly on the compact sets of $X$. The space of all holomorphic functions $f:X \to Y$ is denoted by $H(X,Y)$. Again, we write $H(X)$ for $H(X, \mathbb{C})$. 

The space $\mathcal{P}(^{m} X)$ can be endowed with  different topologies. Here, we consider two of them. On the one hand, we consider on $\mathcal{P}(^{m} X)$ the compact-open topology, i.e. the topology of uniform convergence on the compact subsets of $X$. In this case, we denote the space by $\mathcal{P}(^{m} X)_{\tau_{0}}$. On the other hand, given $p \in  \mathcal{P}(^{m}X)$, we  define the norm
\begin{equation} \label{norma-p}
\Vert p \Vert = \sup_{\Vert x \Vert_{X} \leq 1} \vert p(x) \vert < \infty \,,
\end{equation}
which  turns $\mathcal{P}(^{m}X)$ into a Banach space,  that we denote by $\mathcal{P}(^{m}X)_{\Vert \cdot \Vert}$.

If $E$ is a locally convex Hausdorff space (lcHs), the space of all continuous linear  operators $T:E \to E$ is denoted by $L(E)$.  Given $T\in L(E)$, the iterates of $T$ are denoted by $T^n:=T\circ\dots\circ T$, for $n\in\N$. The operator $T$ is said to be \textit{power bounded} if the sequence $(T^n)_{n\in\N} \subseteq L(E)$ is equicontinuous.  
The Ces\`aro means of $T$ are given by
\[ 
T_{[n]}:= \frac{1}{n}\sum_{k=0}^{n-1} T^k,
\]
for $n \in \mathbb{N}$. Then $T$ is said to be \textit{mean ergodic} if $(T_{[n]} x)_{n}$ converges in $E$ for every $x\in E$. Moreover, an operator $T$ is  \emph{uniformly mean ergodic} if $(T_{[n]})_{n\in\N}$ converges to some operator $S\in L(E)$ in the topology of uniform convergence on the bounded subsets of $E$, and \textit{Ces\`aro bounded} if the sequence $(T_{[n]})_{n}$ is equicontinuous. A simple computation shows that every power bounded operator is Ces\`aro bounded. 

This note is motivated by several previous works existing in the literature. We mention \cite{BoDo2011A} where the authors characterise those composition operators $C_{\varphi}:H(U)\to H(U)$ which are power bounded when defined on the space of holomorphic functions $H(U)$ on a connected domain of holomorphy $U$ of $\C^{d}$. Moreover, it is proved in \cite{BoDo2011A} that $C_{\varphi}$ is power bounded if and only if it is (uniformly) mean ergodic if and only if the symbol $\varphi$ has stable orbits. If the domain is the unit disc, the authors in \cite{BGJJ2016m} characterise when $C_{\varphi}$ is mean ergodic or uniformly mean ergodic on the disc algebra or on the space of bounded holomorphic functions in terms of the asymptotic behaviour of the symbol. In \cite{BGJJ2016mw} it is investigated the power boundedness and (uniform) mean ergodicity of weighted composition operators on the space of holomorphic functions on the unit disc in terms of the symbol and the  multiplier. Finally, in \cite{K2019p}  the author studies power boundedness and  mean ergodicity for (weighted) composition operators on function spaces defined by local properties in a very general framework which extends previous works. In particular, permits to characterize (uniform) mean ergodicity for composition operators on a large class of function spaces which are Fr\'echet-Montel spaces when equipped with the compact-open topology. The space $\PmX$ is neither Fr\'echet with the compact-open topology, nor Montel in the Banach case. Hence, the results of \cite{K2019p} do not apply in our setting. 

The paper is organised as follows. In Section 2 we see, in Proposition~\ref{Linealsiosi}, that the composition operator $C_\varphi:\PmX\rightarrow\PmX$ is well defined only when its corresponding symbol $\varphi$ is linear. In Section 3 we study the dynamics of $C_{\varphi}$ on $\PmX_{\tau_{0}}$. We characterise when $C_{\varphi}$ is power bounded (Proposition~\ref{PBDestablepolinomis}) in the spirit of \cite[Proposition 1]{BoDo2011A}. Moreover, since the space $\PmX_{\tau_{0}}$ is semi-Montel, we can show in Corollary~\ref{PmX powerbdd ume} that every power bounded composition operator $C_{\varphi}$ is uniformly mean ergodic. We finish this section by giving an example of a composition operator which is uniformly mean ergodic but not power bounded (Example~\ref{contraexDual}). Finally, in Section 4, we study the dynamics of $C_{\varphi}$ on $\PmX_{\Vert \cdot \Vert}$. 
In contrast with what happens with the compact-open topology, in this case the properties of 
power boundedness and mean ergodicity are not related. We give examples of composition operators that are power bounded and not mean ergodic (Example~\ref{CS pb NOme}) and of operators that are mean ergodic and not power bounded (Example~\ref{CF me NOpb}). We also study the relation with Ces\`aro boundedness and prove that every mean ergodic operator on a Banach space is Ces\`aro bounded, but that there are  Cesàro bounded composition operators that are neither power bounded, nor mean ergodic (Example~\ref{CTacbNOpb}).
%
%
%
%
We use the theory of homogeneous polynomials and holomorphic functions as presented in \cite{Di99} and \cite{mujica}. For standard theory and notation of functional analysis we refer to \cite{MeVo97}.

\section{First results}

If we want to iterate the composition of a composiiton operator with itself we obviously need it to take values in $\mathcal{P}(^{m}X)$. This is the first thing that we have to settle, and we start with a simple observation.

\begin{nota}\label{PPIidDUal}
Suppose $X$ is a Banach space. If $x,y \in X$ satisfy that there are  $\gamma_{0} \in X'$ and $r>0$ such that $\gamma (x) = \gamma (y)$ for every $\gamma \in X'$ with $\Vert \gamma - \gamma_{0} \Vert <r$, then $x=y$.  Indeed, take  any $\phi \in X'$, fix $c > \Vert \phi \Vert$ and consider $\gamma: = \frac{r}{c} \phi + \gamma_0$. Then $\Vert \gamma - \gamma_{0} \Vert <r$ and
\[ 
 \frac{r}{c} \phi(x) + \gamma_0(x)= \gamma(x)=\gamma(y)= \frac{r}{c}\phi(y) + \gamma_0(y) \,.
\]
The fact that $\gamma_0(x)=\gamma_0(y)$ immediately gives $\phi (x) = \phi(y)$ and, since $\phi$ was arbitrary, $x=y$.
\end{nota}

\begin{prop}\label{Linealsiosi}
Let $\varphi:X\rightarrow X$ be a holomorphic mapping. The composition operator $C_\varphi:\PmX\rightarrow\PmX$ is well defined if and only if $\varphi$ is linear. 
\end{prop}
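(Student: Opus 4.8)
The plan is to prove both implications directly. For the easy direction, suppose $\varphi$ is linear, i.e.\ $\varphi \in L(X)$. Then for any $p \in \PmX$ with associated symmetric $m$-linear form $L$, the composition $p \circ \varphi$ is given by $(p\circ\varphi)(x) = L(\varphi x, \ldots, \varphi x)$, and since $(x_1,\ldots,x_m)\mapsto L(\varphi x_1,\ldots,\varphi x_m)$ is a continuous $m$-linear form, $p\circ\varphi$ is again an $m$-homogeneous polynomial; continuity/boundedness of $C_\varphi$ is then immediate from $\Vert p\circ\varphi\Vert \le \Vert p\Vert\,\Vert\varphi\Vert^m$ (and likewise it is $\tau_0$-continuous). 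So $C_\varphi:\PmX\to\PmX$ is well defined.

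For the converse, assume $C_\varphi$ is well defined, so that $p\circ\varphi \in \PmX$ for every $p\in\PmX$. The first key step is to feed in the polynomials of the special form $p=\gamma^m$ for $\gamma\in X'$: then $\gamma^m\circ\varphi = (\gamma\circ\varphi)^m$ must be an $m$-homogeneous polynomial. Now $\gamma\circ\varphi \in H(X)$ is a scalar holomorphic function, and the condition that its $m$-th power be $m$-homogeneous should force $\gamma\circ\varphi$ itself to be $1$-homogeneous, i.e.\ linear. To make this rigorous I would use the Taylor expansion \eqref{taylor} of $g:=\gamma\circ\varphi$, say $g=\sum_{k\ge 0} q_k$ with $q_k\in\mathcal{P}(^kX)$; comparing homogeneous components of $g^m$ and using that $g^m$ has only its degree-$m$ component nonzero, one deduces (by looking at the lowest-degree nonvanishing term, and then the next, etc.) that $g=q_1$ is linear whenever $g\not\equiv 0$. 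The case $g\equiv 0$ is harmless. Hence $\gamma\circ\varphi \in X'$ for every $\gamma\in X'$.

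The final step is to upgrade "$\gamma\circ\varphi$ is linear for every $\gamma\in X'$" to "$\varphi$ is linear." Since $\gamma(\varphi(x+y)) = \gamma(\varphi x) + \gamma(\varphi y)$ and $\gamma(\varphi(\lambda x)) = \lambda\gamma(\varphi x)$ for all $\gamma\in X'$, $\lambda\in\C$, $x,y\in X$, the Hahn--Banach theorem (separating points of $X$) gives $\varphi(x+y)=\varphi x+\varphi y$ and $\varphi(\lambda x)=\lambda\varphi x$; together with the continuity of $\varphi$ this yields $\varphi\in L(X)$. (Remark~\ref{PPIidDUal} could also be invoked here, or simply the standard point-separation property of the dual.)

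The main obstacle is the middle step: justifying carefully that a scalar holomorphic function $g$ on $X$ whose $m$-th power is $m$-homogeneous must itself be linear. One has to argue that all Taylor coefficients $q_k$ of $g$ with $k\neq 1$ vanish — the cleanest route is probably to restrict to each one-dimensional slice $t\mapsto g(tx)$, reducing to the elementary fact that an entire function of one complex variable whose $m$-th power is a monomial $ct^m$ must be $c^{1/m}t$, and then recombining the slice information to conclude $g$ is a continuous linear functional. Everything else (both the linear $\Rightarrow$ well defined direction and the passage from $X'\circ\varphi\subseteq X'$ to linearity of $\varphi$) is routine.
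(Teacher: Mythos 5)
Your proposal is correct, and while it shares the paper's two central devices --- testing $C_\varphi$ against the polynomials $\gamma^m$ for $\gamma\in X'$, and resolving the resulting $m$-th-root-of-unity ambiguity by a continuity/finite-range argument --- it organizes the converse differently. The paper works with the vector-valued $\varphi$ throughout: it first shows the root of unity $\mu(\gamma,\lambda,x)$ is independent of $\gamma$ (by letting $\gamma$ range over a ball and invoking a point-separation remark), then independent of $\lambda$, concludes $\varphi(\lambda x)=\lambda\varphi(x)$, and finally applies uniqueness of the Taylor expansion \eqref{taylor} to $\varphi$ itself to kill every homogeneous component except the linear one. You instead linearize each scalar function $g=\gamma\circ\varphi$ separately (your slice argument $t\mapsto g(tx)$ is exactly the paper's ``$\mu$ independent of $\lambda$'' step, and it is sound: from $h(t)^m=t^m h(1)^m$ and continuity one gets $h(t)=h(1)t$, the evaluation at $t=1$ being the normalization that removes the root of unity --- your phrasing ``must be $c^{1/m}t$'' glosses over this but the idea is right), apply the Taylor-uniqueness argument to the scalar $g$, and only at the very end pass to $\varphi$ via Hahn--Banach. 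This buys you something: you get additivity and homogeneity of $\varphi$ simultaneously from point separation and never need the paper's ``independence of $\gamma$'' step or its Remark~\ref{PPIidDUal}. Your forward direction, via the associated symmetric $m$-linear form, is also more self-contained than the paper's, which derives $m$-homogeneity of $C_\varphi(p)$ under scaling and then cites a criterion identifying scaling-homogeneous holomorphic functions with polynomials. Both routes are complete; yours trades the paper's intermediate vector-valued identity $\varphi(\lambda x)=\lambda\varphi(x)$ for a final duality argument.
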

\begin{proof}
First, we assume that $\varphi:X\rightarrow X$ is linear (being holomorphic, it is continuous). If  $p\in \PmX$, we have
\[ 
C_\varphi(p)(\lambda x)= p(\varphi(\lambda x))
= p(\lambda\varphi(x)) =\lambda^{m} p(\varphi(x)) =\lambda^m C_\varphi(p)(x), 
\]
for all $x\in X$ and $\lambda\in\C$. Since $C_\varphi(p)$ is holomorphic, \cite[Corollary~15.34]{libro_2019} gives that it is an $m$-homogeneous polynomial and, therefore $C_\varphi: \PmX \to \PmX$ is well defined.\\
Suppose now that $C_\varphi : \mathcal{P}(^{m} X) \to \mathcal{P}(^{m} X)$ is well defined. This means that 
\[
p(\varphi(\lambda x))= p(\lambda\varphi(x)) =\lambda^m p(\varphi(x)),
\]
for all $p\in\PmX$, $\lambda\in\C$ and $x\in X$. Given $\gamma \in X'$ we have that $\gamma^{m}$, defined by $\gamma^{m}(x) = (\gamma (x))^{m}$, belongs to $\mathcal{P}(^{m} X)$. So, 
\begin{equation} \label{maggio}
\gamma(\varphi(\lambda x))^m=\lambda^m\gamma(\varphi(x))^m,
\end{equation}
for all $\lambda\in\C$ and $x\in X$. Then, for each $\gamma, \lambda, x$ there is some $\mu=\mu(\gamma,\lambda,x)\in\C$ with $\mu^m=1$ such that
\begin{equation} \label{botifarra}
\gamma(\varphi(\lambda x))= \mu \lambda\gamma(\varphi(x)).  
\end{equation}
Note that if  $\gamma(\varphi(x))=0$, then by \eqref{maggio} $\gamma(\lambda \varphi(x))=0$, and we can take  $\mu(\gamma,\lambda,x) =1$ for every $\lambda$ (in fact, in this case the equality holds for any value of $\mu$ we choose). Our aim si to show that we can also take $\mu(\gamma,\lambda,x) =1$ for every $\gamma,\lambda,x$. 
 To begin with we show that $\mu$ does not depend on $\gamma$ (i.e. $\mu =\mu(\lambda,x)$). Fix $x_0\in X$ and  $\gamma_0 \in X'$ so that $\gamma_0(\varphi(x_0))\neq 0$. Since $T : X' \to \mathbb{C}$ defined as
\[ 
T(\gamma):=\gamma(\varphi(x_{0}))
\]
is a well-defined continuous linear operator, given any $\varepsilon >0$ we find $r>0$ so that 
\[ 
|T(\gamma)| = \vert \gamma(\varphi(x_{0})) \vert >\varepsilon,
\]
for every $\gamma \in B(\gamma_{0}, r)$ (the open unit ball centred at $\gamma_{0}$ with radius $r$). We now fix $\lambda_{0} \in \mathbb{C} \setminus \{0\}$ and consider the function $f: B(\gamma_{0}, r) \to \mathbb{C}$ 
by
\[
f(\gamma)=\frac{\gamma(\varphi(\lambda_{0} x_0))}{\lambda_{0} \gamma(\varphi(x_0))} \,.
\]
This is continuous and $f(\gamma) = \mu = \mu(\gamma,\lambda_{0},x_{0})$ for every $\gamma \in B(\gamma_{0}, r)$. But $\mu$ is an $m$-th root of $1$, so $f$ takes values in a finite set an therefore has to be constant. In other words, there is some $\mu_{0} = \mu_{0} (\lambda_{0}, x_{0})$ so that $f(\gamma) = \mu_{0}$ for all  $\gamma \in B(\gamma_{0}, r)$, that is
\[
\gamma(\varphi(\lambda_{0} x_0))=\gamma(\lambda_{0}\mu_{0}\varphi(x_0)),
\]
for every $\gamma$ with $\Vert \gamma - \gamma_{0} \Vert < r$. Remark~\ref{PPIidDUal} yields 
\[
\varphi(\lambda_{0} x_{0})=  \mu_{0}\lambda_{0} \varphi(x_{0}) \,.
\]
This shows that for each $\lambda$ and $x$ there is  some $\mu = \mu (\lambda, x)$ such that \eqref{botifarra} holds for every $\gamma$.

Our next step is to see that $\mu$ can also be taken independently from $\lambda$. To do so, first we observe that the mapping $\lambda \in \mathbb{C} \rightsquigarrow \lambda \gamma_{0}(\varphi (x_{0})) \in \mathbb{C}$ is 
continuous (recall that $\gamma_{0}$ and $x_{0}$ are chosen so that $\gamma_0(\varphi(x_0))\neq 0$). Then the function $g : \mathbb{C} \setminus \{0\} \to \mathbb{C}$ given by
\[
g(\lambda) = \frac{\gamma_{0}(\varphi(\lambda x_0))}{\lambda \gamma_{0}(\varphi(x_0))} 
\]
is continuous and $g(\lambda) = \mu =\mu(\lambda , x_{0})$. As before, $g$ takes values on a finite set, hence is constant and we can find $\mu_{0}(x_{0})$ so that $\mu_{0} (\lambda , x_{0}) = \mu_{0}(x_{0})$ for every $\lambda \in \mathbb{C}$ (note that taking $\lambda =0$ in \eqref{maggio}, the equality in \eqref{botifarra} holds for any $\mu$). Then, for each fixed $x$ there is $\mu=\mu(x)$ so that \eqref{botifarra} holds for every $\gamma, \lambda$. In other words, given $\lambda$ and $x$ we have that $\gamma(\varphi(\lambda x))= \gamma(\mu(x) \lambda \varphi(x))$ for every $\gamma \in X'$ and, then
\[
\varphi(\lambda x) = \mu(x) \lambda \varphi(x), \,
\]
for every $\lambda \in \mathbb{C}$. Taking $\lambda =1$ shows that in \eqref{botifarra} we may take $\mu(x) = 1$ for every $x$. This shows our claim and 
\[
\gamma(\varphi(\lambda x))=  \lambda\gamma(\varphi(x)),
\]
for every $\gamma, \lambda, x$. Therefore, $\lambda \varphi(x)= \varphi(\lambda x)$ for every $\lambda, x$, as we have that $\gamma(\varphi(\lambda x))=\gamma(\lambda\varphi(x))$ for all $\gamma$.

Since $\varphi$ is holomorphic we can find a unique sequence $(p_{m})_{m}$, where each $p_{m}:X \to X$ is an $m$-homogeneous polynomial, satisfying \eqref{taylor}. Then
\[ 
\lambda\sum_{m=0}^{\infty} p_m(x)= \lambda \varphi(x)= \varphi(\lambda x)= \sum_{m=0}^{\infty} p_m(\lambda x)= \sum_{m=0}^{\infty} \lambda^m p_m(x), 
\]  
for every $\lambda \in \mathbb{C}$ and $x \in X$. The uniqueness of the sequence of polynomials yields $(\lambda^m-\lambda)p_m(x)=0$ for every $m \in \mathbb{N}_{0}$, $\lambda \in \mathbb{C}$ and $x \in X$. Taking any $\lambda^{m-1} \neq 1$ shows that $p_{m} \equiv 0$ for every $m \neq 1$ and therefore $\varphi = p_{1}$ is linear.
\end{proof}

We  study  dynamical properties of a composition operator $C_{\varphi} :  \mathcal{P}(^{m}X) \to  \mathcal{P}(^{m}X)$. By Proposition~\ref{Linealsiosi}, $\varphi$ is a continuous linear mapping. We also have

\begin{prop}
	Let $\varphi : X \to X$ be a continuous linear operator. If $\tau = \tau_{0}$ or $\Vert \cdot \Vert$, the composition operator $C_{\varphi} :  \mathcal{P}(^{m}X)_{\tau} \to  \mathcal{P}(^{m}X)_{\tau}$ is continuous.
\end{prop}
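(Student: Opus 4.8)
The plan is to verify continuity in each of the two topologies separately, using that $\varphi$ is a continuous linear operator (so $\|\varphi\| < \infty$) and that $C_\varphi(p) = p \circ \varphi$.

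First consider the norm topology on $\mathcal{P}(^m X)$. For $p \in \mathcal{P}(^m X)_{\|\cdot\|}$ and $\|x\|_X \le 1$ we have $|C_\varphi(p)(x)| = |p(\varphi(x))| \le \|p\| \, \|\varphi(x)\|^m \le \|p\| \, \|\varphi\|^m$, using the homogeneity estimate $|p(y)| \le \|p\| \|y\|^m$ valid for every $y \in X$. Taking the supremum over the unit ball gives $\|C_\varphi(p)\| \le \|\varphi\|^m \|p\|$, so $C_\varphi$ is bounded (hence continuous) on $\mathcal{P}(^m X)_{\|\cdot\|}$ with $\|C_\varphi\| \le \|\varphi\|^m$.

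Next consider the compact-open topology $\tau_0$. A basic seminorm is $p \mapsto \sup_{x \in K} |p(x)|$ for $K \subseteq X$ compact. Given such a $K$, continuity of $\varphi$ ensures that $\varphi(K)$ is again a compact subset of $X$, and
\[
\sup_{x \in K} |C_\varphi(p)(x)| = \sup_{x \in K} |p(\varphi(x))| = \sup_{y \in \varphi(K)} |p(y)|.
\]
Thus each generating seminorm of $\tau_0$ composed with $C_\varphi$ is dominated by (in fact equal to) another generating seminorm of $\tau_0$, which is exactly the criterion for continuity of a linear map into a space whose topology is defined by that family of seminorms. Linearity of $C_\varphi$ is immediate from $C_\varphi(\alpha p + \beta q) = (\alpha p + \beta q)\circ\varphi = \alpha (p\circ\varphi) + \beta(q\circ\varphi)$.

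There is no real obstacle here; the only thing one must be slightly careful about is that $C_\varphi$ indeed maps $\mathcal{P}(^m X)$ into itself, but that is precisely the content of Proposition~\ref{Linealsiosi} (the "if" direction), so it may be invoked directly. If one wanted a single uniform argument, one could note that in both cases the topology is generated by seminorms of the form $p \mapsto \sup_{x\in A}|p(x)|$ (with $A$ the unit ball in the norm case, $A$ ranging over compact sets in the $\tau_0$ case), and that $C_\varphi$ pulls each such seminorm back to the seminorm associated with the set $\varphi(A)$, which is bounded (resp. compact) whenever $A$ is; continuity then follows in one stroke.
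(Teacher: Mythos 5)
Your proposal is correct and follows essentially the same argument as the paper: in the norm case both use the estimate $|p(y)|\le\|p\|\,\|y\|^m$ to get $\|C_\varphi(p)\|\le\|\varphi\|^m\|p\|$, and in the compact-open case both observe that $\varphi(K)$ is compact and that the seminorm over $K$ pulls back to the seminorm over $\varphi(K)$. The extra remarks on linearity and well-definedness are harmless additions.
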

\begin{proof}
	If $\tau=\tau_{0}$, given any arbitrary compact subset $K\subset X$, the set $L:=\varphi(K)$ is also compact and
	\[ 
	\sup_{x\in K} |C_{\varphi}(p)(x)|= \sup_{x\in K} |p(\varphi(x))|= \sup_{x\in L} |p(x)|, 
	\]
	for all $p\in \PmX$.	If $\tau=\Vert \cdot \Vert$, we observe 
	\[
	\Vert C_{\varphi}(p) \Vert_{\mathcal{P}(^{m}X)} 
	= \sup_{\Vert x \Vert_{X} \le 1} |p(\varphi(x))|\\ 
	\le \|p\|_{\mathcal{P}(^{m}X)}\sup_{\Vert x \Vert_{X} \le  1} \| \varphi(x)\|_{X}^{m}
	\le \|\varphi\|_{L(X)}^{m}\ \|p\|_{\mathcal{P}(^{m}X)},
	\]
	for all $p\in \PmX$.
\end{proof}

\section{Dynamics with the compact-open topology}



Our first result is mentioned in~\cite[p.~917]{BoPaRi11}. We include a proof for completeness.  It relies on the fact \cite[Proposition~3.3]{BoPaRi11} that every power bounded operator on a semi-reflexive lcHs is mean ergodic (this extended an analogous result for reflexive Fr\'echet spaces \cite[Corollary~2.7]{AlBoRi09}).

\begin{prop} \label{fok}
	Let $E$ be a semi-Montel locally convex Hausdorff space. Then every power bounded operator on $E$ is uniformly mean ergodic.
\end{prop}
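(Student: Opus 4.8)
The plan is to deduce the statement from the already-known fact that power bounded operators on semi-reflexive spaces are mean ergodic, and then to promote the resulting pointwise convergence of the Ces\`aro means to \emph{uniform} convergence on bounded sets by an Ascoli-type argument; the crucial point is that in a semi-Montel space bounded sets are relatively compact.

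First I would record the reductions. A semi-Montel space is semi-reflexive, so by \cite[Proposition~3.3]{BoPaRi11} the power bounded operator $T$ is mean ergodic; write $Px:=\lim_{n}T_{[n]}x$ for the (clearly linear) pointwise limit. Since $T$ is power bounded, $(T^{n})_{n}$ is equicontinuous, hence so is the sequence of Ces\`aro means $(T_{[n]})_{n}$. Consequently, for every closed $0$-neighbourhood $V$ there is a $0$-neighbourhood $U$ with $T_{[n]}(U)\subseteq V$ for all $n$, and letting $n\to\infty$ gives $P(U)\subseteq V$, so $P\in L(E)$. Thus $S:=P$ is the only candidate for the limit of $(T_{[n]})_{n}$ in the topology of uniform convergence on bounded sets, and it remains to verify the convergence is uniform on each bounded set.

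Next, fix a bounded set $B\subseteq E$ and a closed absolutely convex $0$-neighbourhood $V$; I must produce $N\in\N$ with $(T_{[n]}-P)(B)\subseteq V$ for all $n\ge N$. Choose a closed $0$-neighbourhood $W$ with $W+W+W\subseteq V$, and then, using equicontinuity of $(T_{[n]})_{n}$, an absolutely convex $0$-neighbourhood $U$ with $T_{[n]}(U)\subseteq W$ for every $n$; passing to the limit also yields $P(U)\subseteq\overline{W}=W$. Here the semi-Montel hypothesis is used: $\overline{B}$ is compact, so it is covered by finitely many translates $x_{1}+U,\dots,x_{k}+U$ with $x_{1},\dots,x_{k}\in\overline{B}$. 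For each $i$, pointwise convergence provides $N_{i}$ with $T_{[n]}x_{i}-Px_{i}\in W$ whenever $n\ge N_{i}$; set $N:=\max_{i}N_{i}$. Then, for $x\in B$ and $n\ge N$, picking $i$ with $x-x_{i}\in U$ we get
\[
T_{[n]}x-Px=T_{[n]}(x-x_{i})+\bigl(T_{[n]}x_{i}-Px_{i}\bigr)+P(x_{i}-x)\in W+W+W\subseteq V,
\]
so $(T_{[n]})_{n}$ converges to $P$ uniformly on $B$. As $B$ was an arbitrary bounded subset of $E$, this shows that $T$ is uniformly mean ergodic.

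I do not anticipate a genuine obstacle: the only mildly delicate ingredients are the standard facts that semi-Montel implies semi-reflexive (so that the cited mean ergodicity theorem applies) and that an equicontinuous family of linear operators converging pointwise converges uniformly on precompact sets, the latter being exactly the finite-covering estimate carried out above. The rest is bookkeeping with $0$-neighbourhoods.
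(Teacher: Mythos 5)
Your proof is correct and follows essentially the same route as the paper: semi-Montel implies semi-reflexive, so \cite[Proposition~3.3]{BoPaRi11} gives mean ergodicity; power boundedness gives equicontinuity of the Ces\`aro means and continuity of the limit; and then pointwise convergence of an equicontinuous sequence is upgraded to uniform convergence on the (pre)compact closures of bounded sets. The only difference is that the paper cites K\"othe \cite[(2), p.~139]{K2} for that last upgrade, whereas you carry out the standard finite-covering argument explicitly, which is perfectly fine.
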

\begin{proof}
	Let $T\in L(E)$ be power bounded. Since $E$ is semi-Montel, it is semi-reflexive and, by \cite[Proposition 3.3]{BoPaRi11}, $T$ is mean ergodic. This means that the sequence $(T_{[n]})_{n}$  converges pointwise. Since $T$ is power bounded, $(T_{[n]})_{n}$ is equicontinuous. Hence, $S(x):=\lim_{n}T_{[n]}x$, for $x\in E$, defines an operator $S\in L(E)$.  Now, by \cite[(2), p.~139]{K2}, the topology of pointwise convergence and of uniform convergence on precompact sets coincide on  $(T_{[n]})_{n}$, which concludes the proof since every bounded set in $E$ is also precompact.
\end{proof}

Now, by \cite[Theorem~2.5]{MitsuhiroMiyagi}, the space $\PmX_{\tau_0}$ is (DFC). Hence, it is  semi-Montel~\cite[Definition~8.3.49]{perezcarrerasbonet} and so, we have

\begin{coro}\label{PmX powerbdd ume}
	Let $\varphi:X\rightarrow X$ be a continuous linear mapping. If $C_\varphi:\PmX_{\tau_0} \rightarrow \PmX_{\tau_0}$ is power bounded, then it is uniformly mean ergodic.
\end{coro}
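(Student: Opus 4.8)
The plan is to obtain the corollary as a direct specialization of Proposition~\ref{fok}, so the only thing to organize is the verification that its hypotheses hold. First I would recall from the proposition preceding the statement that, since $\varphi:X\to X$ is a continuous linear operator, $C_\varphi$ is a well-defined continuous linear map on $\PmX_{\tau_0}$; hence $C_\varphi \in L(\PmX_{\tau_0})$ and it makes sense to speak about its power boundedness and mean ergodicity. By hypothesis the sequence of iterates $(C_\varphi^n)_n$ is equicontinuous in $L(\PmX_{\tau_0})$.

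Next I would invoke the structural facts recorded just before the statement: by \cite[Theorem~2.5]{MitsuhiroMiyagi} the space $\PmX_{\tau_0}$ is a (DFC) space, and every (DFC) space is semi-Montel by \cite[Definition~8.3.49]{perezcarrerasbonet}. Thus $E=\PmX_{\tau_0}$ is a semi-Montel locally convex Hausdorff space, and $T=C_\varphi$ is a power bounded operator on it. Proposition~\ref{fok} then applies verbatim and gives that $C_\varphi$ is uniformly mean ergodic, i.e.\ the Ces\`aro means $(C_\varphi)_{[n]}$ converge to some $S\in L(\PmX_{\tau_0})$ in the topology of uniform convergence on the bounded subsets of $\PmX_{\tau_0}$.

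I do not expect any genuine obstacle in writing this proof: everything has already been prepared. The one place where real content sits is inside the two cited results — the identification of $\PmX_{\tau_0}$ as a (DFC), hence semi-Montel, space — but once that input is granted, the corollary is a one-line deduction from Proposition~\ref{fok}, and the whole proof amounts to recording that chain of implications.
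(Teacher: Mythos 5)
Your proposal is correct and coincides with the paper's own argument: the corollary is deduced exactly as you describe, by noting that $\PmX_{\tau_0}$ is (DFC) by \cite[Theorem~2.5]{MitsuhiroMiyagi}, hence semi-Montel, and then applying Proposition~\ref{fok}. Nothing is missing.
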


The converse implication does not hold in general. To show this fact, we  characterise the power boundedness of the 
composition operator in terms of properties of the symbol $\varphi$. We begin with the following

\begin{lema}\label{polynomialconvex}
Let $K \subseteq X$ be a compact set and $m \geq 1$. Then the set
$$
\widehat{K}_{\PmX}:= \{ x\in X : |p(x)|\leq \sup_{y\in K} |p(y)|,\ \mbox{\rm for all } p\in \PmX  \} 
$$
is compact.
\end{lema}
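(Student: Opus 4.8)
The plan is to exhibit $\widehat{K}_{\PmX}$ as a closed subset of a set which is compact for classical reasons, namely the closed absolutely convex hull of $K$; a closed subset of a compact set is compact, so this will suffice.

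First I would note that $\widehat{K}_{\PmX}$ is closed: for each fixed $p\in\PmX$ the set $\{x\in X:|p(x)|\le\sup_{y\in K}|p(y)|\}$ is closed because $p$ is continuous, and $\widehat{K}_{\PmX}$ is the intersection of all of these sets as $p$ ranges over $\PmX$. The decisive step is then to observe that the defining condition is already forced by the \emph{linear} functionals alone: for $\gamma\in X'$ the polynomial $\gamma^{m}$ belongs to $\PmX$, so every $x\in\widehat{K}_{\PmX}$ satisfies
\[
|\gamma(x)|^{m}=|\gamma^{m}(x)|\le\sup_{y\in K}|\gamma^{m}(y)|=\Bigl(\sup_{y\in K}|\gamma(y)|\Bigr)^{m},
\]
and therefore $|\gamma(x)|\le\sup_{y\in K}|\gamma(y)|$ for every $\gamma\in X'$. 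Writing $\overline{\D}\cdot K:=\{\lambda y:\lambda\in\overline{\D},\ y\in K\}$ and noting that $\sup_{y\in K}|\gamma(y)|=\sup_{z\in\overline{\co}(\overline{\D}\cdot K)}|\gamma(z)|$, the bipolar theorem — equivalently, the Hahn--Banach separation theorem applied to the closed, bounded, absolutely convex set $\overline{\co}(\overline{\D}\cdot K)$ — yields $x\in\overline{\co}(\overline{\D}\cdot K)$. Thus $\widehat{K}_{\PmX}\subseteq\overline{\co}(\overline{\D}\cdot K)$, the closed absolutely convex hull of $K$.

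To conclude, recall that $\overline{\D}\cdot K$ is compact, being the image of the compact set $\overline{\D}\times K$ under scalar multiplication, so that its closed convex hull $\overline{\co}(\overline{\D}\cdot K)$ is compact by Mazur's theorem (see \cite{Di99,mujica}); here is where it matters that $X$ is a Banach space. Hence $\widehat{K}_{\PmX}$ is a closed subset of a compact set and is therefore compact. The step I expect to need the most care is the passage through the bipolar theorem: one must work with the \emph{absolutely} convex hull, so that the modulus appearing in the definition of $\widehat{K}_{\PmX}$ is matched, and use that for a convex set the norm closure and the $\sigma(X,X')$-closure coincide, so that the separation argument applies. The rest is routine.
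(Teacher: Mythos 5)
Your proof is correct and follows essentially the same route as the paper's: both reduce the defining inequalities to the powers $\gamma^{m}$ of linear functionals to show $|\gamma(x)|\le\sup_{y\in K}|\gamma(y)|$ for all $\gamma\in X'$, then use the bipolar theorem (equivalently, Hahn--Banach separation) to place $\widehat{K}_{\PmX}$ inside the closed absolutely convex hull of $K$, whose compactness (Krein's theorem in the paper, Mazur's theorem in your write-up) together with the closedness of $\widehat{K}_{\PmX}$ finishes the argument. The only difference is cosmetic: the paper phrases the containment via the chain of polars $K^{\circ}\subseteq(\widehat{K}_{\PmX})^{\circ}$ and $\widehat{K}_{\PmX}\subseteq K^{\circ\circ}=\overline{\Gamma(K)}$, while you argue directly by separation from the closed absolutely convex hull.
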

\begin{proof}
First, we observe that $\widehat{K}_{\PmX}$ is closed, being an intersection of closed sets. Now, for every $\gamma \in X'$ and $x\in \widehat{K}_{\PmX}$, we have $\vert \gamma^{m} (x) \vert \leq \sup_{y\in K}|\gamma^{m} (y)|$ (because $\gamma^{m}\in \PmX$) and, consequently,
\[
 \bigg(\sup_{x\in \widehat{K}_{\PmX}}|\gamma(x)|\bigg)^m 
= \sup_{x\in \widehat{K}_{\PmX}}|\gamma^m(x)|  
\leq  \sup_{x\in K} |\gamma^m(x)| 
= \bigg(\sup_{x\in K}|\gamma(x)| \bigg)^m  \,.
\]
This implies $K^\circ  
\subseteq 
(\widehat{K}_{\PmX})^\circ$.
An application of Krein's theorem~\cite[(4), pg.~325]{K1} gives that the closure of the absolutely convex hull $\overline{\Gamma(K)}$ of $K$ is compact. Since, by the Bipolar theorem \cite[22.13]{MeVo97},
\[
\widehat{K}_{\PmX}\subseteq (\widehat{K}_{\PmX})^{\circ \circ}\subseteq K^{\circ \circ }\subseteq \big(\overline{\Gamma(K)}\big)^{\circ \circ }=\overline{\Gamma(K)},
\]
we obtain the result.
\end{proof}

Now, we characterise when a composition operator is power bounded. 
We say that a continuous linear mapping $\varphi : X \to X$ has \emph{stable orbits} (see \cite{BoDo2011A}) if for every compact set $K \subseteq X$ there is some compact set $L\subseteq X$ so that $\varphi^n(K) \subseteq L$ for every $n\in\N$.

\begin{prop}\label{PBDestablepolinomis}
Let $\varphi:X\rightarrow X$ be a continuous linear map. Then $C_\varphi:\PmX_{\tau_0} \rightarrow \PmX_{\tau_0}$ is power bounded if and only if $\varphi$ has stable orbits.
\end{prop}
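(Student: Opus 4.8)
The plan is to characterise power boundedness of $C_\varphi$ on $\PmX_{\tau_0}$ directly through the seminorms defining the compact-open topology, and then convert the resulting uniform estimate on iterates into the stable-orbit condition on $\varphi$, using Lemma~\ref{polynomialconvex} to pass between compact sets and the polynomially convex hulls that naturally appear. First I would record that a basis of continuous seminorms on $\PmX_{\tau_0}$ is given by $p \mapsto \sup_{x \in K} |p(x)|$ as $K$ ranges over compact subsets of $X$, and that $(C_\varphi^n)_n$ is equicontinuous precisely when: for every compact $K \subseteq X$ there is a compact $L \subseteq X$ and a constant $c > 0$ with $\sup_{x \in K} |C_\varphi^n(p)(x)| = \sup_{x \in K} |p(\varphi^n(x))| \le c \sup_{x \in L} |p(x)|$ for all $n \in \N$ and all $p \in \PmX$. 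Since $C_\varphi^n(p) = p \circ \varphi^n$, this is exactly the statement that $\varphi^n(K) \subseteq \widehat{L}_{\PmX}$ for all $n$ (with $c = 1$), i.e.\ that $\bigcup_n \varphi^n(K)$ is contained in the polynomially convex hull of some fixed set.

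For the ``if'' direction, assume $\varphi$ has stable orbits. Given a compact $K$, choose a compact $L$ with $\varphi^n(K) \subseteq L$ for all $n$. Then $\sup_{x\in K}|p(\varphi^n(x))| \le \sup_{y \in L}|p(y)|$ for every $p \in \PmX$ and every $n$, which is precisely the equicontinuity estimate with constant $1$; hence $(C_\varphi^n)_n$ is equicontinuous, i.e.\ $C_\varphi$ is power bounded. For the ``only if'' direction, assume $C_\varphi$ is power bounded. Fix a compact $K \subseteq X$. Equicontinuity of $(C_\varphi^n)_n$ at the seminorm associated with $K$ gives a compact $L \subseteq X$ (and we may absorb any constant by replacing $L$ with a suitable dilate, using homogeneity of the relevant seminorms, or simply keep the constant since it does not affect compactness of the hull) such that $\sup_{x \in K}|p(\varphi^n(x))| \le \sup_{y \in L}|p(y)|$ for all $n$ and all $p \in \PmX$. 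This says exactly that $\varphi^n(K) \subseteq \widehat{L}_{\PmX}$ for every $n$. By Lemma~\ref{polynomialconvex}, $\widehat{L}_{\PmX}$ is compact, so setting $\widetilde{L} := \widehat{L}_{\PmX}$ we get a single compact set with $\varphi^n(K) \subseteq \widetilde{L}$ for all $n \in \N$; that is, $\varphi$ has stable orbits.

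I expect the only genuinely delicate point to be the bookkeeping in the ``only if'' direction: equicontinuity a priori produces an inequality of the form $\sup_K |p \circ \varphi^n| \le c \sup_L |p|$ with a constant $c \ge 1$, and one must observe that this still forces $\varphi^n(K) \subseteq \widehat{L'}_{\PmX}$ for an appropriate compact $L'$ (for instance one checks the hull is unchanged under scaling $L$, since $\widehat{tL}_{\PmX} = t\widehat{L}_{\PmX}$ for $t>0$, or one notes $\widehat{L}_{\PmX}$ already has the property $\widehat{c^{1/m}L}_{\PmX} \supseteq c^{1/m}\widehat{L}_{\PmX}$, absorbing $c$). Everything else is a routine translation between the definition of the compact-open topology, the definition of stable orbits, and the hull appearing in Lemma~\ref{polynomialconvex}; the substantive input making the compact set in the conclusion genuinely compact is precisely that lemma, together with Krein's theorem that underlies it.
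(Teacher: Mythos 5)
Your proposal is correct and follows essentially the same route as the paper: both directions reduce to the seminorm characterisation of equicontinuity, the constant $c$ is absorbed into a dilated compact set via $m$-homogeneity ($c\sup_{L}|p|=\sup_{c^{1/m}L}|p|$), and Lemma~\ref{polynomialconvex} supplies the compactness of the hull $\widehat{L}_{\PmX}$ that serves as the stable-orbit witness. The only cosmetic difference is that the paper phrases the ``only if'' direction as a proof by contradiction, whereas you argue directly.
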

\begin{proof}
Let us suppose first that $\varphi$ has stable orbits and fix $K \subseteq X$ compact. Then we can find a compact set $L \subseteq X$ so that $\varphi^n (x) \in L$ for every $x \in K$ and $n\in\N$. This  gives
$\sup_{x\in K} |C_\varphi^n(p)(x)|
\leq \sup_{x\in L} |p(x)|$
for every $p\in \PmX$ and $n \in \mathbb{N}$. Hence  $C_{\varphi}$ is power bounded.

Assume now that $C_{\varphi}$ is power bounded. If $\varphi$ does not have stable orbits there is a compact set $K\subset X$ such that $\cup_{n=0}^{\infty} \varphi^{n}(K)$ is not relatively compact. Since $(C_{\varphi}^{n})_{n}=(C_{\varphi^{n}})_{n}$ is equicontinuous in $L(\PmX)$, for the compact set $K$, we can find another compact set $W \subseteq X$ and  $c>0$ so that, for all $p\in\PmX$ and  $n\in \N,$
\begin{equation}\label{no-so}
\sup_{x\in K} |p(\varphi^n(x))| \leq c \sup_{x\in W}|p(x)| 
= \sup_{x\in W}|p(c^{1/m} x)| = \sup_{x\in c^{1/m} W}|p(x)| \,.
\end{equation}
The set $V:=c^{1/m} W$ is compact and, by Lemma~\ref{polynomialconvex}, and so also is $L:=\hat{V}_{\mathcal{P}(^{m}X)}$. If  there are $n_0\in\N$ and $x_0\in K$ so that $\varphi^{n_0}(x_0)\notin L$, then we can find $p\in\PmX$ such that $|p(\varphi^{n_0}(x_0))| > \sup_{y\in V}|p(y)|$. But this is not possible by \eqref{no-so}, which shows that $\varphi^{n}(K)\subseteq L$ for all $n\in\N$, contradicting the fact that $\cup_{n=0}^{\infty} \varphi^{n}(K)$ is not relatively  compact. This completes the proof.
\end{proof}


We  give an example of a composition operator showing that the converse implication in Corollary~\ref{PmX powerbdd ume} does not hold in general. This example and 
others that will be given later for $\mathcal{P}(^{m}X)_{\Vert \cdot \Vert}$ are based on the weighted backward shift, 
defined as follows. Fix $1 \leq p < \infty$ and
take $0 < \alpha < 1/p$. The \textit{unilateral weighted backward shift} is the operator $\varphi_{\alpha} : \ell_{p} \to \ell_{p}$ defined by
\begin{equation} \label{weightedbackward}
\varphi_{\alpha} (e_{1} ) =0 \, \text{ and }\, \varphi_{\alpha} (e_k) = \Big(\frac{k}{k-1}\Big)^\alpha e_{k-1} \, \text{ for } \, k \geq 2 \,,
\end{equation}
that is,  $\varphi_{\alpha} (x_{1}, x_{2}, \ldots) = (w_{1}x_{2}, w_{2}x_{3}, \ldots)$, for every $x=(x_{i})_{i}\in\ell_{p},$ where $w_{k}= \big(\frac{k+1}{k}\big)^\alpha,$ for $k\in\N$.

\begin{examp}\label{contraexDual}
For each $1 < p< \infty$ and $0<\alpha<1/p$ the composition operator $C_{\varphi_{\alpha}} : \mathcal{P}(^{1} \ell_{p})_{\tau_{0}} \to  \mathcal{P}(^{1} \ell_{p})_{\tau_{0}}$ is uniformly mean ergodic, but not power bounded. 

We recall that $\varphi_{\alpha}$ is mixing \cite[Corollary~2.3]{BonillaPeris} and, since $\ell_{p}$ is separable,  hypercyclic (see e.g. \cite[Theorem~1.2]{BaMa09}). This means that there exists $x_{0} \in \ell_{p}$ such that $\{ \varphi_{\alpha}^{n} (x_{0}) \}_{n}$ is dense in $\ell_{p}$. 
Since norm and weakly bounded sets coincide (see e.g. \cite[Theorem~2.5.5]{Me98} or \cite[Proposition~8.11]{MeVo97}) this implies that  $\{ \varphi_{\alpha}^{n} (x_{0}) \}_{n}$ is not weakly bounded, and we can find 
$u \in \ell'_{p} = \mathcal{P}(^{1} \ell_{p})$ such that $\{ |u(\varphi_{\alpha}^{n} (x_{0}) )| \}_{n \in\N}$ is unbounded. Since $u(\varphi_{\alpha}^{n} (x_{0})) = C_{\varphi_{\alpha}}^{n} (u)(x_{0})$,  the operator $C_{\varphi_{\alpha}}$ is not
power bounded. 

To see that  $C_{\varphi_{\alpha}}$ is mean ergodic, first we observe that, by \cite[Theorem~2.2]{BonillaPeris}, there is $c>0$ so that
\[ 
\Big\| \frac{1}{n} \sum_{k=0}^{n-1} \varphi_{\alpha}^{k}(x) \Big\|_{\ell_{p}} \leq c\|x\|_{\ell_{p}}, 
\]
for all $n\in\N$ and $x\in \ell_p$. Therefore, for $u \in \ell'_{p}$ and $x \in \ell_{p}$ we have
\[
\Big\vert \frac{1}{n} \sum_{k=0}^{n-1} C_{\varphi_{\alpha}}^{k} (u)(x) \Big\vert
= \Big\vert \frac{1}{n} \sum_{k=0}^{n-1} u( \varphi_{\alpha}^{k} x)  \Big\vert
\leq \Vert u \Vert_{\ell'_{p}}  \Big\| \frac{1}{n} \sum_{k=0}^{n-1} \varphi_{\alpha}^{k}(x) \Big\|_p 
\leq c\,\Vert u \Vert_{\ell'_{p}}   \|x\|_{\ell_{p}} \,.
\]
This shows that $\big( (C_{\varphi_{\alpha}})_{[n]}(u) \big)_{n}$ is equicontinuous for every fixed $u \in \ell'_{p}$. 

Since $\ell_{p}$ is reflexive, \cite[Theorem~2.2 and Corollary~2.7]{BonillaPeris} give that  $\varphi_{\alpha}$ is mean ergodic. Then we can find $\varphi\in L(\ell_{p})$ such that $\lim_{n} (\varphi_{\alpha})_{[n]} (x)=\varphi(x) \in \ell_{p}$ for every $x \in \ell_{p}$. If $u \in \ell'_{p}$ is fixed, continuity gives
\[
\lim_{N\to \infty} (C_{\varphi_{\alpha}})_{[N]}(u)(x) = \lim_{N\to \infty} u( {\varphi_{\alpha}}_{[N]}(x)) = u\big(\lim_{N\to \infty} {\varphi_{\alpha}}_{[N]} (x) \big)= u(\varphi (x)),
\]
for every $x\in\ell_{p}$. In other words, $\big( (C_{\varphi_{\alpha}})_{[n]} (u)\big)_{n}$ converges pointwise to $C_\varphi(u)$ for every $u\in\ell_{p}'$. Now, by \cite[(2), p.~139]{K2} the topology of pointwise convergence and of convergence on compact sets coincide on equicontinuous sets. Since $C_\varphi\in L\big((\ell'_{p})_{\tau_0}\big)$, $\big( (C_{\varphi_{\alpha}})_{[n]} (u) \big)_{n}$ is $\tau_{0}$-convergent to $C_\varphi(u)$ for every $u\in\ell_{p}'$ and, hence, $C_{\varphi_{\alpha}}$ is mean ergodic.

In fact,  $C_{\varphi_{\alpha}}$ is uniformly mean ergodic. To check this  first we observe that $\big( (\varphi_{\alpha})_{[n]} - \varphi \big)_{n}$ is pointwise convergent to $0$ and so, equicontinuous on $\ell_p$. Therefore $\big( (\varphi_{\alpha})_{[n]} - \varphi \big)_{n}$ converges to $0$ uniformly on the compact subsets of $\ell_p$. Now,  we take an arbitrary $\tau_0$-bounded set $V\subset \ell'_{p}$, which is also norm-bounded in $\ell'_{p}$ (see, for instance, \cite[p.~267]{MeVo97}). Therefore, for any compact set $K\subset \ell_p$ and $n\in\N$ we have, for some constant $c>0$,
\begin{multline*}
 \sup_{u\in V}\sup_{x\in K}  \big| \big( (C_{\varphi_{\alpha}})_{[n]}-C_{\varphi} \big)(u)(x) \big|=\sup_{u\in V}\sup_{x\in K} \big| u\big( ((\varphi_{\alpha})_{[n]} - \varphi) (x)\big) \big|\\
\leq \sup_{u\in V}\sup_{x\in K} \|u\|_{p'} \big\| \big( (\varphi_{\alpha})_{[n]} - \varphi\big) (x) \big\|_p
 \leq c\sup_{x\in K}\big\| \big( (\varphi_{\alpha})_{[n]} - \varphi\big) (x) \big\|_p,
\end{multline*}
which gives the conclusion.
\end{examp}

\section{Dynamics with the norm topology}


We consider now the Banach space $\mathcal{P} (^{m} X)$ endowed with the norm given in \eqref{norma-p}. We study the interplay between power boundedness, Ces\`aro boundedness and mean ergodicity.  
%
%
%
%
%
%

As a first step we characterise, as we did in Proposition~\ref{PBDestablepolinomis}, the power boundedness of a composition operator by means of the symbol.

\begin{prop}\label{PBennormaPmX}
Let $\varphi:X\rightarrow X$ be a continuous linear map. Then $C_\varphi:\PmX_{\|\cdot\|}\to \PmX_{\|\cdot\|}$ is power bounded if and only if $\varphi$ is power bounded.
\end{prop}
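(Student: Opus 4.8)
The plan is to prove the exact identity
\[
\|C_\varphi^{\,n}\|_{L(\PmX_{\|\cdot\|})}=\|\varphi^{n}\|_{L(X)}^{m}\qquad(n\in\N),
\]
from which the stated equivalence is immediate, since a sequence in a normed operator algebra is equicontinuous (power bounded) precisely when its operator norms are uniformly bounded, and $t\mapsto t^{m}$ is a homeomorphism of $[0,\infty)$. Throughout one uses that $C_\varphi^{\,n}=C_{\varphi^{n}}$ and that each $\varphi^{n}$ is again a continuous linear map $X\to X$.

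For the inequality $\le$ I would simply invoke the norm estimate for composition operators on $\PmX_{\|\cdot\|}$ already recorded in Section~2, applied to the symbol $\varphi^{n}$: it gives $\|C_{\varphi^{n}}(p)\|\le\|\varphi^{n}\|_{L(X)}^{m}\,\|p\|$ for all $p\in\PmX$, hence $\|C_\varphi^{\,n}\|\le\|\varphi^{n}\|_{L(X)}^{m}$. In particular, if $\varphi$ is power bounded, say $M:=\sup_{n}\|\varphi^{n}\|_{L(X)}<\infty$, then $\sup_{n}\|C_\varphi^{\,n}\|\le M^{m}<\infty$, so $C_\varphi$ is power bounded.

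For the reverse inequality $\ge$ I would test $C_\varphi^{\,n}$ only against the polynomials of the form $\gamma^{m}$, $\gamma\in X'$. The elementary points are: $\gamma^{m}\in\PmX$ with $\|\gamma^{m}\|_{\PmX}=\|\gamma\|_{X'}^{m}$, because $\sup_{\|x\|_X\le1}|\gamma(x)^{m}|=(\sup_{\|x\|_X\le1}|\gamma(x)|)^{m}$; and $C_\varphi^{\,n}(\gamma^{m})=(\gamma\circ\varphi^{n})^{m}$ with $\gamma\circ\varphi^{n}\in X'$, so $\|C_\varphi^{\,n}(\gamma^{m})\|_{\PmX}=\|\gamma\circ\varphi^{n}\|_{X'}^{m}$. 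Taking suprema over $\|\gamma\|_{X'}\le1$ and using Hahn--Banach in the form $\|\varphi^{n}x\|_X=\sup_{\|\gamma\|_{X'}\le1}|\gamma(\varphi^{n}x)|$, one gets $\sup_{\|\gamma\|\le1}\|\gamma\circ\varphi^{n}\|_{X'}=\sup_{\|x\|_X\le1}\|\varphi^{n}x\|_X=\|\varphi^{n}\|_{L(X)}$, hence $\|C_\varphi^{\,n}\|\ge\sup_{\|\gamma\|\le1}\|C_\varphi^{\,n}(\gamma^{m})\|_{\PmX}=\|\varphi^{n}\|_{L(X)}^{m}$. Consequently, if $C_\varphi$ is power bounded with $c:=\sup_{n}\|C_\varphi^{\,n}\|<\infty$, then $\sup_{n}\|\varphi^{n}\|_{L(X)}\le c^{1/m}<\infty$, i.e.\ $\varphi$ is power bounded.

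There is no genuine obstacle in this argument; the only thing to get right is that restricting attention to the monomials $\gamma^{m}$ already recovers the full operator norm of $\varphi^{n}$. This plays, in the Banach (norm) setting, the role that the polynomially convex hull of Lemma~\ref{polynomialconvex} played for the compact-open topology in Proposition~\ref{PBDestablepolinomis}: in both cases one reduces testing polynomials to testing $m$-th powers of functionals.
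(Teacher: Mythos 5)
Your proof is correct and follows essentially the same route as the paper: the forward direction is the norm estimate $\|C_{\varphi^{n}}(p)\|\le\|\varphi^{n}\|^{m}\|p\|$, and the converse tests $C_{\varphi}^{\,n}$ against the polynomials $\gamma^{m}$ and recovers $\|\varphi^{n}x\|$ via Hahn--Banach, exactly as in the paper. The only (harmless) difference is that you package the argument as the exact identity $\|C_{\varphi}^{\,n}\|=\|\varphi^{n}\|^{m}$, which is a slightly sharper statement than the paper records but rests on the same computation.
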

\begin{proof}
Suppose in first place that $\varphi:X\rightarrow X$ is power bounded, then there is a constant $c>0$ such that 
\[ 
\|\varphi^n(x)\|_{X} \leq c \|x\|_{X}, 
\]
for all $n\in\N$ and for all $x\in X$. Using this we have, for $p\in\PmX$ and $n\in\N$,
\[
\|C_{\varphi^n}(p)\|_{\mathcal{P}(^{m}X)} 
\leq \|p\|_{\mathcal{P}(^{m}X)}  \sup_{\|x\|\leq 1} \| \varphi^n(x) \|_{X}^m \\
\leq 
c^m \|p\|_{\mathcal{P}(^{m}X)}.
\]	
Hence $C_\varphi:\PmX_{\|\cdot\|}\rightarrow \PmX_{\|\cdot\|}$ is power bounded. 

Conversely,  assume that $C_\varphi:\PmX_{\|\cdot\|}\rightarrow \PmX_{\|\cdot\|}$ is power bounded. We can find $c>0$ such that $\Vert p \circ \varphi^{n} \Vert \leq c \Vert p \Vert$, for every $p\in\PmX$. In particular, we have
\[
\sup_{\Vert x \Vert_{X} < 1} \vert \gamma (\varphi^{n} x) \vert^{m}
\leq c \sup_{\Vert z \Vert_{X} < 1} \vert \gamma (z) \vert^{m}, \,
\]
for every $n \in \mathbb{N}$ and every $\gamma \in X'$. Hence, we obtain  $\vert \gamma (\varphi^{n}x) \vert \leq c^{1/m} \Vert \gamma \Vert$ for every $\gamma \in X'$, and every  $x$ with $\Vert x \Vert_{X} <1$ and all $n\in\N$. An application of the Hahn-Banach theorem completes the proof.
\end{proof}

\begin{prop}
	Let $\varphi:X\rightarrow X$ be a continuous linear map such that $C_\varphi:\PmX_{\tau_0}\rightarrow \PmX_{\tau_0}$ is power bounded. Then $C_\varphi:\PmX_{\|\cdot\|}\rightarrow \PmX_{\|\cdot\|}$ is power bounded.
\end{prop}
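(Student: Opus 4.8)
The plan is to reduce the statement to the two characterizations already proved and then settle a purely linear fact. First I would invoke Proposition~\ref{PBDestablepolinomis}: the hypothesis that $C_\varphi:\PmX_{\tau_0}\to\PmX_{\tau_0}$ is power bounded says exactly that $\varphi$ has stable orbits. Next, by Proposition~\ref{PBennormaPmX}, the desired conclusion that $C_\varphi:\PmX_{\|\cdot\|}\to\PmX_{\|\cdot\|}$ is power bounded is equivalent to $\varphi:X\to X$ itself being power bounded. So the whole problem collapses to showing: a continuous linear operator on a Banach space with stable orbits is power bounded.

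To prove this, I would apply the stable-orbits property to the compact singleton $\{x\}$, for each fixed $x\in X$: it produces a compact, hence bounded, set $L_{x}\subseteq X$ containing $\varphi^{n}(x)$ for all $n$, so that $\sup_{n}\|\varphi^{n}(x)\|_{X}<\infty$. Thus $\{\varphi^{n}:n\in\N\}\subseteq L(X)$ is pointwise bounded, and since $X$ is complete the uniform boundedness principle gives $\sup_{n}\|\varphi^{n}\|_{L(X)}<\infty$; that is, $\varphi$ is power bounded. A second application of Proposition~\ref{PBennormaPmX} then yields the claim. (If one prefers to avoid Banach--Steinhaus, the same conclusion follows by contradiction: assuming $\sup_{n}\|\varphi^{n}\|=\infty$, choose $n_{k}$ with $\|\varphi^{n_{k}}\|\geq k^{3}$ and unit vectors $y_{k}$ with $\|\varphi^{n_{k}}(y_{k})\|\geq\tfrac12\|\varphi^{n_{k}}\|$; then $x_{k}:=y_{k}/k\to 0$, so $K:=\{x_{k}:k\in\N\}\cup\{0\}$ is compact, yet $\|\varphi^{n_{k}}(x_{k})\|\geq k^{2}/2\to\infty$, contradicting stability of the orbit of $K$.)

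I do not expect a genuine obstacle here; the only point requiring a little care is that in infinite dimensions the closed unit ball of $X$ is not compact, so stability cannot be fed the unit ball directly --- which is precisely why the argument passes through single points together with completeness (or through a rescaled null sequence). I would also remark in passing that the converse implication is not needed and is not evident, since power boundedness of $\varphi$ does not obviously force the orbit of an arbitrary compact set to be relatively compact.
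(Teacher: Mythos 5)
Your proposal is correct and follows essentially the same route as the paper: reduce via Proposition~\ref{PBDestablepolinomis} and Proposition~\ref{PBennormaPmX}, then deduce power boundedness of $\varphi$ from stable orbits by applying stability to singletons and invoking the uniform boundedness principle. The alternative argument avoiding Banach--Steinhaus is a nice extra but not needed.
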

\begin{proof}
By Proposition~\ref{PBDestablepolinomis}, $\varphi$ has stable orbits and for each $x \in X$ we can find a compact set $K_{x} \subseteq X$ such that $(\varphi^n(x))_{n\in \N} \subset K_x$. This gives that 
$\sup_{n\in\N} \| \varphi^n(x)\| < \infty$ for every $x \in X$ and, by the uniform boundedness principle, $\sup_{n\in\N} \| \varphi^n \| < \infty$. This shows that $\varphi$ is power bounded and, by Proposition~\ref{PBennormaPmX}, so also is 
$C_\varphi:\PmX_{\|\cdot\|}\rightarrow \PmX_{\|\cdot\|}$.
\end{proof}

The converse implication is not true in general.
\begin{examp}
Consider the composition operator $C_\varphi : \mathcal{P}(^{m}c_{0}) \to \mathcal{P}(^{m}c_{0})$ defined by the usual forward shift $\varphi: c_0 \rightarrow c_0$ given by $\varphi(x)=(0, x_1, x_2, \dots)$.
Let us see that  $C_\varphi$  is power bounded in $\PmX_{\|\cdot\|}$ but it is not in $\PmX_{\tau_0}$. On the one hand,  we observe that $\|\varphi^n(x)\| =\|x\|$ for every $x \in c_{0}$ and all $n\in\N$, but 
$(\varphi^n(e_1))_{n\in\N}=(e_n)_{n\in\N}$ is not relatively compact in $c_0$. This shows that $\varphi$ is power bounded but does not have stable orbits. As a consequence of Propositions~\ref{PBennormaPmX} and~\ref{PBDestablepolinomis}, $C_{\varphi}$ is power bounded on $\mathcal{P}(^{m} c_{0})_{\Vert \cdot \Vert}$ but not on $\mathcal{P}(^{m} c_{0})_{\tau_{0}}$.
\end{examp}

\begin{examp}\label{CTacbNOpb}
Fix $m \geq 2$ and  $0<\alpha<1/m$. The operator $\varphi_{\alpha} : \ell_{m} \to \ell_{m}$ defined in \eqref{weightedbackward} satisfies that $C_{\varphi_{\alpha}} : \mathcal{P}(^{m} \ell_{m})_{\Vert \cdot \Vert} \to  \mathcal{P}(^{m} \ell_{m})_{\Vert \cdot \Vert}$ is Ces\`aro bounded but neither power bounded nor mean ergodic.

From the proof of \cite[Theorem~2.1]{BonillaPeris} we obtain
\[
\sum_{k=1}^{n} \Vert \varphi_{\alpha}^{k} (x) \Vert_{\ell_{m}}^{m} \leq 4n,
\]
for every $x \in \ell_{m}$, $\|x\|\le 1$, and $n \in \mathbb{N}$. Hence given $p \in \mathcal{P}(^{m} \ell_{m})$ we have, for $\|x\|\le 1$,
\[
\Big\Vert \sum_{k=1}^{n} p (\varphi_{\alpha}^{k} x) \Big\Vert_{\ell_{m}} 
\leq \sum_{k=1}^{n}  \Vert p (\varphi_{\alpha}^{k} x) \Vert_{\ell_{m}} 
\leq \sum_{k=1}^{n}  \Vert p \Vert_{\mathcal{P}(^{m}\ell_{m})} \Vert \varphi_{\alpha}^{k} (x) \Vert_{\ell_{m}} 
\leq \Vert p \Vert_{\mathcal{P}(^{m}\ell_{m})} 4 n \,.
\]
Now, we take the supremum over $\|x\|\le 1$ to obtain
\[
\Big\Vert \frac{1}{n} \sum_{k=1}^{n} C_{\varphi_{\alpha}}^{k} (p) \Big\Vert_{\mathcal{P}(^{m}\ell_{m})}
\leq 4  \Vert p \Vert_{\mathcal{P}(^{m}\ell_{m})},
\]
which shows that $ C_{\varphi_{\alpha}}$ is Ces\`aro bounded.
We know that $\varphi_{\alpha}$ is hypercyclic~\cite{BonillaPeris} (see also Example~\ref{contraexDual} in the present notes). Hence it cannot be power bounded and so, by Proposition~\ref{PBennormaPmX}, neither is $C_{\varphi_{\alpha}}$. To show that it is not mean ergodic we take the $m$-homogeneous polynomial given by
\begin{equation} \label{sparss}
p(x)=\sum_{i\in\N} x_i^m,
\end{equation}
and prove that $\big( (C_{\varphi_{\alpha}})_{[n]}(p) \big)_{n\in\N}$ does not converge in $\mathcal{P}(^{m} X)$. First, we observe that, for a fixed $n\in\N$, we have
\[
\varphi_{\alpha}^{k}( e_{n+1})= \begin{cases}
\big(\frac{n+1}{n+1-k}\big)^\alpha e_{n+1-k}, & \text{ if } n \geq k, \\
0, & \text{ if } n< k.
\end{cases}
\]
Then
\begin{align*}
|(C_{\varphi_{\alpha}})_{[n]}(p)(e_{n+1}) &-(C_{\varphi_{\alpha}})_{[3n]}(p)(e_{n+1})| \\
&= \left|  \frac{1}{n} \sum_{k=1}^n p(\varphi_{\alpha}^k (e_{n+1}) ) -\frac{1}{3n} \sum_{k=1}^{3n} p(\varphi_{\alpha}^k (e_{n+1}) )  \right|\\
&= \left|  \frac{1}{n} \sum_{k=1}^n p(\varphi_{\alpha}^k (e_{n+1}) ) -\frac{1}{3n} \sum_{k=1}^{n} p(\varphi_{\alpha}^k (e_{n+1}) )  \right|\\
&= \frac{2}{3n} \left|  \sum_{k=1}^n p(\varphi_{\alpha}^k (e_{n+1}) ) \right| = 
\frac{2}{3n} \left|  \sum_{k=1}^n p\left(\left(\frac{n+1}{n+1-k}\right)^\alpha e_{n+1-k} \right) \right| \\
&= \frac{2}{3n} \left|  \sum_{k=1}^n \left(\frac{n+1}{n+1-k}\right)^{m \alpha} \right|
= \frac{2}{3n} \left|  \sum_{k=1}^n \left(1+ \frac{k}{n+1-k}\right)^{m \alpha} \right| \\
&\geq \frac{2}{3n} \left|  \sum_{k=1}^n 1 \right| = \frac{2}{3}.
\end{align*}
This implies
\[
\Vert (C_{\varphi_{\alpha}})_{[n]}(p) -(C_{\varphi_{\alpha}})_{[3n]}(p) \Vert > \frac{2}{3}\,,
\]
and so, $\big( (C_{\varphi_{\alpha}})_{[n]} \big)_{n}$ is not Cauchy. Hence $C_{\varphi_{\alpha}}$ is not mean ergodic.
\end{examp}

This settles the relationship between absolute Ces\`aro boundedness and power boundedness and mean ergodicity. We look now at the latter two. Unlike what we saw in Corollary~\ref{PmX powerbdd ume} for the compact-open topology, when we consider the norm topology we may find composition operators that are power bounded but not mean ergodic.

\begin{examp}\label{CS pb NOme}
For $m \geq 1$ we consider the usual backward shift $\sigma : \ell_{m} \to  \ell_{m}$ defined as
\[
\sigma (x_{1}, x_{2}, x_{3}, \ldots) = (x_{2}, x_{3}, \ldots) \,.
\]
Then the composition operator $C_{\sigma} : \mathcal{P} (^{m} \ell_{m})_{\Vert \cdot \Vert} \to \mathcal{P} (^{m} \ell_{m})_{\Vert \cdot \Vert}$ is power bounded but not mean ergodic.

Let us observe that $\Vert \sigma^{n} (x) \Vert_{\ell_{m}} \leq \Vert x \Vert_{\ell_{m}}$ for every $x\in\ell_{m}$. So $\sigma$ is power bounded. Applying Proposition~\ref{PBennormaPmX} we obtain that $C_{\sigma}$ is power bounded.

To see that it is not mean ergodic we take the polynomial $p$ defined in \eqref{sparss} and observe that
\[
\sigma^{k} (e_{n+1}) = \begin{cases}
e_{n+1-k}, & \text{ if } n \geq k, \\
0, & \text{ if } n < k.
\end{cases}
\]
Then
\begin{multline*}
|(C_{\sigma})_{[n]}(p)(e_{n+1})-(C_{\sigma})_{[3n]}(p)(e_{n+1})| \\
= \left|  \frac{1}{n} \sum_{k=1}^n p(\sigma^k( e_{n+1}) ) -\frac{1}{3n} \sum_{k=1}^{3n} p(\sigma^k (e_{n+1}) )  \right|
= \left|  \frac{1}{n} \sum_{k=1}^n p(\sigma^k (e_{n+1} )) -\frac{1}{3n} \sum_{k=1}^{n} p(\sigma^k (e_{n+1}) )  \right|\\
= \frac{2}{3n} \left|  \sum_{k=1}^n p(\sigma^k (e_{n+1}) ) \right| = 
\frac{2}{3n} \left|  \sum_{k=1}^n p\left( e_{n+1-k} \right) \right| 
= \frac{2}{3n} \left|  \sum_{k=1}^n 1 \right|= \frac{2}{3}.
\end{multline*}
This, as in Example~\ref{CTacbNOpb}, shows that  $\big( (C_{\sigma})_{[n]} \big)_{n}$ is not Cauchy and that $C_{\sigma}$ is not mean ergodic.
\end{examp}

\begin{examp}\label{CF me NOpb}
For fixed $1 < p < \infty$ we take $0 < \alpha < \frac{1}{p'} : = 1 - \frac{1}{p}$ and define the weighted forward shift $\psi_{\alpha} : \ell_{p} \to \ell_{p}$ by
\[
\psi_{\alpha} (e_{k}) = \Big(  \frac{k}{k-1} \Big)^{\alpha} e_{k+1} \text{ for } k \geq 1 \,,
\]
that is, $\psi_{\alpha} (x_{1}, x_{2}, \ldots) = (0, w_{1} x_{1}, w_{2}x_{2}, \ldots)$, where, as before,  $w_{k} =\big(  \frac{k+1}{k} \big)^{\alpha}$ for $k\ge 1$. We consider the composition 
operator $C_{\psi_{\alpha}} : \mathcal{P}(^{1} \ell_{p})_{\Vert \cdot \Vert} \to \mathcal{P}(^{1}\ell_{p})_{\Vert \cdot \Vert}$. Since $\mathcal{P}(^{1}\ell_{p})_{\Vert \cdot \Vert} = \ell'_{p}= \ell_{p'}$, for each
$u \in \ell_{p'}$ and $x \in \ell_{p}$ we have
\[
C_{\psi_{\alpha}} (u)(x) = u\big( {\psi_{\alpha}}(x) \big) = u(0, w_{1} x_{1}, w_{2}x_{2}, \ldots)
= \sum_{k=1}^{\infty} x_{k} w_{k} u_{k+1} = \varphi_{\alpha}(u)(x) \,.
\]
This shows that $C_{\psi_{\alpha}} = \varphi_{\alpha} : \ell_{p'} \to \ell_{p'}$. By \cite[Corollary~2.7]{BonillaPeris} ($\ell_{p}$ is reflexive), $C_{\psi_{\alpha}}$ is mean ergodic. On the 
other hand, it is hypercyclic \cite[Corollary 2.3]{BonillaPeris} (see also Example~\ref{contraexDual} in the present notes) and therefore not power bounded.
\end{examp}


%

\textbf{Acknowledgements.} We are indebted to  Prof. Jos\'e Bonet and Prof. Daniel Carando for helpful suggestions about this work. The research of the first author was partially supported by the project MTM2016-76647-P. The research of the second author was partially supported by the project GV Pro\-meteo 2017/102. The research of the third author was partially supported by the project  MTM2017-83262-C2-1-P.


\begin{thebibliography}{10}

\bibitem{AlBoRi09}
A.~A. Albanese, J.~Bonet, and W.~J. Ricker.
\newblock Mean ergodic operators in {F}r\'{e}chet spaces.
\newblock {\em Ann. Acad. Sci. Fenn. Math.}, 34(2):401--436, 2009.

\bibitem{BaMa09}
F.~Bayart and E.~Matheron.
\newblock {\em Dynamics of linear operators}, volume 179 of {\em Cambridge
  Tracts in Mathematics}.
\newblock Cambridge University Press, Cambridge, 2009.

\bibitem{BGJJ2016m}
M.~J. Beltr\'{a}n-Meneu, M.~C. G\'{o}mez-Collado, E.~Jord\'{a}, and D.~Jornet.
\newblock Mean ergodic composition operators on {B}anach spaces of holomorphic
  functions.
\newblock {\em J. Funct. Anal.}, 270(12):4369--4385, 2016.

\bibitem{BGJJ2016mw}
M.~J. Beltr\'{a}n-Meneu, M.~C. G\'{o}mez-Collado, E.~Jord\'{a}, and D.~Jornet.
\newblock Mean ergodicity of weighted composition operators on spaces of
  holomorphic functions.
\newblock {\em J. Math. Anal. Appl.}, 444(2):1640--1651, 2016.

\bibitem{BonillaPeris}
T.~Bermúdez, A.~Bonilla, V.~M\"uller, and A.~Peris.
\newblock Cesàro bounded operators in banach spaces.
\newblock {\em preprint}, arXiv(1706.03638), 2017.

\bibitem{BoPaRi11}
J.~Bonet, B.~de~Pagter, and W.~J. Ricker.
\newblock Mean ergodic operators and reflexive {F}r\'{e}chet lattices.
\newblock {\em Proc. Roy. Soc. Edinburgh Sect. A}, 141(5):897--920, 2011.

\bibitem{BoDo2011A}
J.~Bonet and P.~Doma\'{n}ski.
\newblock A note on mean ergodic composition operators on spaces of holomorphic
  functions.
\newblock {\em Rev. R. Acad. Cienc. Exactas F\'{\i}s. Nat. Ser. A Mat. RACSAM},
  105(2):389--396, 2011.

\bibitem{libro_2019}
A.~Defant, D.~García, M.~Maestre, and P.~Sevilla-Peris.
\newblock {\em Dirichlet Series and Holomorphic Functions in High Dimensions}.
\newblock New Mathematical Monographs. Cambridge University Press, 2019.

\bibitem{Di99}
S.~Dineen.
\newblock {\em Complex analysis on infinite-dimensional spaces}.
\newblock Springer Monographs in Mathematics. Springer-Verlag London Ltd.,
  London, 1999.

\bibitem{K2019p}
T.~Kalmes.
\newblock Power bounded weighted composition operators on function spaces
  defined by local properties.
\newblock {\em J. Math. Anal. Appl.}, 471(1-2):211--238, 2019.

\bibitem{K1}
G.~K\"{o}the.
\newblock {\em Topological vector spaces. {I}}.
\newblock Translated from the German by D. J. H. Garling. Die Grundlehren der
  mathematischen Wissenschaften, Band 159. Springer-Verlag New York Inc., New
  York, 1969.

\bibitem{K2}
G.~K\"{o}the.
\newblock {\em Topological vector spaces. {II}}, volume 237 of {\em Grundlehren
  der Mathematischen Wissenschaften [Fundamental Principles of Mathematical
  Science]}.
\newblock Springer-Verlag, New York-Berlin, 1979.

\bibitem{Me98}
R.~E. Megginson.
\newblock {\em An introduction to {B}anach space theory}, volume 183 of {\em
  Graduate Texts in Mathematics}.
\newblock Springer-Verlag, New York, 1998.

\bibitem{MeVo97}
R.~Meise and D.~Vogt.
\newblock {\em Introduction to functional analysis}, volume~2 of {\em Oxford
  Graduate Texts in Mathematics}.
\newblock The Clarendon Press, Oxford University Press, New York, 1997.
\newblock Translated from the German by M. S. Ramanujan and revised by the
  authors.

\bibitem{MitsuhiroMiyagi}
M.~Miyagi.
\newblock A linear expression of polynomials on locally convex spaces, and
  holomorphic functions on {$({\rm DF})$}-spaces.
\newblock {\em Mem. Fac. Sci. Kyushu Univ. Ser. A}, 40(1):1--18, 1986.

\bibitem{mujica}
J.~Mujica.
\newblock {\em Complex analysis in {B}anach spaces}, volume 120 of {\em
  North-Holland Mathematics Studies}.
\newblock North-Holland Publishing Co., Amsterdam, 1986.

\bibitem{perezcarrerasbonet}
P.~P\'{e}rez~Carreras and J.~Bonet.
\newblock {\em Barrelled locally convex spaces}, volume 131 of {\em
  North-Holland Mathematics Studies}.
\newblock North-Holland Publishing Co., Amsterdam, 1987.
\newblock Notas de Matem\'{a}tica [Mathematical Notes], 113.

\end{thebibliography}
\end{document}